\theoremstyle{plain}
\numberwithin{equation}{section}
\newtheorem{thm}{Theorem}[section]
\newtheorem{prop}[thm]{Proposition}
\newtheorem{lem}[thm]{Lemma}
\theoremstyle{definition}
\newtheorem{dfn}[thm]{Definition}
\newtheorem{exm}[thm]{Example}
\newtheorem{rem}[thm]{Remark}
\def\rank{\mathop{\mathrm{rank}}\nolimits}
\def\dim{\mathop{\mathrm{dim}}\nolimits}
\def\End{\mathop{\mathrm{End}}\nolimits}
\def\Ad{\mathop{\mathrm{Ad}}\nolimits}
\def\AdG{\mathop{\mathrm{Ad}^G}\nolimits}
\def\ad{\mathop{\mathrm{ad}}\nolimits}
\def\CC{\mathbb{C}}
\def\bfd{\mathbf{d}}
\def\RR{\mathbb{R}}
\def\ZZ{\mathbb{Z}}
\def\PP{\mathbb{P}}
\def\ii{\mathbf{i}}
\def\EE{\mathtt{E}}
\def\ttE{\mathtt{E}}
\def\fil{F^{\bullet}}
\def\ttH{\mathtt{H}}
\def\ttS{\mathtt{S}}
\def\bfc{\mathbf{c}}
\def\bd{\mathrm{bd}}
\def\calE{\mathcal{E}}
\def\calZ{\mathcal{Z}}
\def\calK{\mathcal{K}}
\def\wf{W_{\bullet}}
\def\cl{\mathrm{c\ell}}
\def\frakb{\mathfrak{b}}
\def\frakg{\mathfrak{g}}
\def\frakk{\mathfrak{k}}
\def\frakt{\mathfrak{t}}
\def\frakp{\mathfrak{p}}
\def\frakh{\mathfrak{h}}
\def\calO{\mathcal{O}}
\def\diff{\mathrm{d}}
\def\Re{\mathop{\mathrm{Re}}\nolimits}
\def\Im{\mathop{\mathrm{Im}}\nolimits}
\def\Int{\mathop{\mathrm{int}}\nolimits}
\def\Aut{\mathop{\mathrm{Aut}}\nolimits}
\def\ev{\mathrm{ev}}
\def\od{\mathrm{od}}
\def\sr{\mathcal{S}}
\def\rt-m{\sqrt{m}}
\def\00{\mathbf{0}}
\def\Ker{\mathop{\mathrm{Ker}}\nolimits}
\title[Cycle connectivity and pseudoconcavity of flag domains]{Cycle connectivity and pseudoconcavity\\ of flag domains}
\author[T.~Hayama]{Tatsuki Hayama}
\address{Department of Business Administration, Senshu University, Higashimita 2-1-1, Tama, Kawasaki, Kanagawa 214-8580, Japan}
\email{hayama@isc.senshu-u.ac.jp}
\begin{document}
\maketitle
\begin{abstract}
We prove that a non-classical flag domain is pseudoconcave if it satisfies a certain condition on the root system.
Moreover, we prove that every point in a codimension-one real boundary orbit of a non-classical period domains is a pseudoconcave boundary point if it satisfies a certain Hodge-theoretical condition.
\end{abstract}
\section{Introduction}

Flag domains are open real group orbits in flag manifolds.
The simplest example of flag domain is open orbits in $\PP^1(\CC)$.
The projective line $\PP^1(\CC)$ is a flag manifold with the holomorphic action of $SL(2,\CC)$, which has the three real forms $SL(2,\RR)$, $SU(1,1)$, and $SU(2)$.
Here the upper/lower-half planes are open $SL(2,\RR)$-orbits, the unit disks around $0$ and $\infty$ are open $SU(1,1)$-orbits, and $\PP^1(\CC)$ itself is the $SU(2)$-orbit.
In general, there are finitely many open real group orbits in flag manifolds, and study on flag domains has been developed by complex geometers (cf. \cite{FHW}).
Flag domains are classified into two kinds; {\it classical} or {\it non-classical}.
Here we say a flag domain is non-classical if it has no non-constant holomorphic function.
A classical flag domain is almost like a Hermitian symmetric domain and is well-studied, however a non-classical one is not.
In this paper, we investigate non-classical flag domains.
In particular, we focus on their {\it cycle connectivity} and {\it pseudoconcavity}.

Cycle connectivity of flag domains are investigated by Huckleberry \cite{Huc} and Green, Robles and Toledo \cite{GRT}.
We recall it briefly.
Let $\check{D}$ be a flag manifold with a holomorphic action of a connected complex semisimple Lie group $G$.
Let $D$ be an open $G_{\RR}$-orbit contained in $\check{D}$ with a real form $G_{\RR}$.
For a base point $o\in D$, we have the parabolic subgroup $P$ of $G$ stabilizing $o$ and $G_{\RR}\cap P$ contains a $\theta$-stable fundamental Cartan subgroup with a Cartan involution $\theta$.
For the maximal compact subgroup $K_{\RR}$ fixed by $\theta$, the $K_{\RR}$-orbit $C_0=K_{\RR}o$ is a compact submanifold contained in $D$.
A flag domain $D$ is non-classical if and only if any two points of $D$ are connected by a connected chain $g_1C_0\cup \cdots\cup g_{\ell}C_0$  contained in $D$ with $g_1,\ldots ,g_{\ell}\in G$.

The other property we discuss in this paper is pseudoconcavity.
We say $D$ is pseudoconcave if $D$ contains a relatively compact subset where a Levi form on every boundary point has at least one negative eigenvalue. 
Pseudoconcavity of complex manifolds is studied by Andreotti and Grauert in 1960's.
Pseudoconcave manifolds are a generalization of compact complex manifolds, which behave like compact complex manifolds in many ways.
For instance, if $D$ is pseudoconcave, $H^0(D,\calO)=\CC$ and $\dim{H^0(D,E)}<\infty$ (the  finiteness theorem) for any holomorphic vector bundle $E$.
Moreover, the field of all meromorphic functions on a pseudoconcave manifold $D$  is an algebraic field of transcendental degree less than $\dim{D}$.
Huckleberry \cite{Huc0} proved that classical flag domains are not pseudoconcave but pseudoconvex.

Cycle connectivity and pseudoconcavity are closely related. 
Huckleberry introduced {\it one-connectivity} in \cite{Huc0}.
One-connectivity is cycle connectivity in a strong sense, which requires that any two points are connected by a single cycle $gC_0$ with some $g\in G$.
He showed that $D$ is pseudoconcave if $D$ is generically one-connected.
For example, $D$ is generically one-connected if $G_{\RR}=SL(n,\RR)$, however we do not completely know what kind of flag domain this property holds.
In \cite[Lecture 10]{G}, Green, Griffiths and Kerr examined pseudoconcavity of the Carayol domain (Example \ref{SU21}) by an argument about boundary behavior of its cycle space. 
On the other hand, Koll\'{a}r \cite{Ko} proved the finiteness theorem for every non-classical flag domains by using their cycle connectivity without pseudoconcavity.
Huckleberry conjectured that any non-classical flag domain is pseudoconcave. 

Our purpose of this paper is to give a sufficient condition for non-classical flag domains to be pseudoconcave, which \cite{Huc0} did not reach.  
In the Lie algebra level, we have the $\theta$-stable Cartan subalgebra $\frakh_{\RR}\subset \frakg_{\RR}\cap \frakp$ and the maximal compact subalgebra $\frakk_{\RR}$ fixed by $\theta$. 
Let $\Delta$ be the set of roots.
Then we have the graded Lie algebra decomposition $\frakg=\frakg^{-k}\oplus\cdots \oplus \frakg^{k}$ defined by a subset of the simple roots such that $\frakp=\bigoplus_{\ell\geq 0}\frakg^{\ell}$.
We say a root $\alpha\in \Delta$ is compact (resp. non-compact) if $\alpha\in\Delta(\frakk)$ (resp. $\alpha\in\Delta(\frakk^{\perp})$).
\begin{thm}\label{main}
Suppose that there exist a compact root $\beta$ such that for any non-compact root $\alpha$ in $\Delta(\frakg^{<0})$ the $\beta$-string containing $\alpha$ is one of the following:
\begin{itemize}
\item $\{\alpha, \alpha+\beta\}$ with $\alpha+\beta\in \Delta(\frakp)$;
\item $\{\alpha, \alpha+\beta, \alpha+2\beta\}$ with $\alpha+2\beta\in \Delta(\frakp)$.
\end{itemize}
Then $D$ is pseudoconcave.
\end{thm}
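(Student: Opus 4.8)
The plan is to deduce pseudoconcavity from the geometry of a single rational curve sitting inside the base cycle. Since $\beta$ is compact, the vectors $e_{\pm\beta}$ and $h_\beta$ span an $\sl_2$ whose real form is $\mathfrak{su}(2)$; write $SU(2)_\beta\subset G_{\RR}$ for the corresponding subgroup and $SL(2,\CC)_\beta\subset G$ for its complexification. Because $\alpha+\beta\in\Delta(\frakp)$ lies in nonnegative grade while $\alpha\in\Delta(\frakg^{<0})$ lies in grade $\le -1$, the root $\beta$ has strictly positive grade; in particular $e_\beta\in\frakp$ while $e_{-\beta}\in\frakg^{<0}$. Consequently the orbit $Z:=SL(2,\CC)_\beta\cdot o=SU(2)_\beta\cdot o$ is a rational curve $Z\cong\PP^1$, tangent at $o$ to the compact line $\frakg_{-\beta}\subset\frakg^{<0}$, and, since $SU(2)_\beta\subset K_{\RR}$, it is contained in the base cycle $C_0=K_{\RR}\cdot o$ and hence in $D$. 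A thin tubular neighbourhood $\Omega$ of $Z$ is therefore relatively compact in $D$, so it only remains to arrange $\partial\Omega$ with a negative Levi eigenvalue at every point.

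The main computation is the degree, along $Z$, of each holomorphic normal direction. The restriction $N_{Z/D}|_Z$ splits as a sum of the homogeneous line bundles $\frakg_\gamma|_Z$ for $\gamma\in\Delta(\frakg^{<0})\setminus\{-\beta\}$, each associated to the character by which the isotropy group $B_\beta$ (with Lie algebra $\langle h_\beta,e_\beta\rangle$) acts on $\frakg_\gamma$. Normalising by $T\PP^1=\frakg_{-\beta}|_Z\cong\calO(2)$, whose fibre weight is $(-\beta)(h_\beta)=-2$, one gets $\deg(\frakg_\gamma|_Z)=-\langle\gamma,\beta^\vee\rangle$. For a non-compact $\alpha\in\Delta(\frakg^{<0})$ the hypothesis says precisely that $\alpha$ is the \emph{lowest} weight of a $\beta$-string of length two or three terminating in $\frakp$, that is $\langle\alpha,\beta^\vee\rangle\in\{-1,-2\}$; hence every non-compact holomorphic normal direction restricts to $\calO(1)$ or $\calO(2)$ on $Z$. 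In particular $N_{Z/D}$ contains the \emph{positive} subbundle $N^{\mathrm{nc}}:=\bigoplus_{\alpha\ \mathrm{noncpt}}\frakg_\alpha|_Z$, a sum of line bundles of degree $1$ or $2$.

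I would then invoke the classical Andreotti--Grauert theory: a compact complex submanifold with a positive normal direction admits a pseudoconcave tubular neighbourhood, the local model being a neighbourhood of a line in $\PP^2$ (normal bundle $\calO(1)$), whose boundary sphere bundle is concave from the side of the curve. Concretely I take $\Omega$ to be a full tube around $Z$ cut out by a defining function $\rho$ that is the squared fibre norm in a fixed $K_{\RR}$-invariant Hermitian metric. Positivity of $N^{\mathrm{nc}}$ makes $-\rho$ strictly plurisubharmonic transverse to $Z$ in these directions, so $\partial\bar\partial\rho$ acquires a negative eigenvalue along an angular non-compact direction; equivalently, through each boundary point one exhibits a small holomorphic disk obtained by exponentiating a combination of $e_{-\beta}$ and an $\frakg_\alpha$-coordinate along which $\rho$ has a strict interior maximum, whence the maximum principle delivers the negative eigenvalue. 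This shows $\partial\Omega$ is pseudoconcave and therefore $D$ is pseudoconcave.

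The step I expect to be the real obstacle is the uniform passage from the algebraic positivity on $Z$ to a genuine negative eigenvalue at \emph{every} point of $\partial\Omega$. The full normal bundle also contains the compact directions $\frakg_\gamma|_Z$, whose degrees need not be positive and which contribute the opposite, convex sign, while relative compactness forces $\Omega$ to be bounded in all normal directions at once; one must therefore check that at each boundary point at least one positive non-compact direction survives as a complex tangent direction and dominates. Because $Z$ is compact this is a finite, $SU(2)_\beta$-equivariant problem, and the precise form of the $\beta$-string hypothesis is exactly what is needed: requiring $\alpha$ to be the lowest weight with the string exiting into $\frakp$ is what forces $\langle\alpha,\beta^\vee\rangle<0$, hence positive degree, whereas a string in which $\alpha$ sat in the interior would produce a flat ($\calO(0)$) or negative normal direction and destroy the concavity. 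Verifying this uniform negativity, together with the smoothness of $\partial\Omega$ over the whole compact boundary, is thus the crux; everything else is the representation-theoretic bookkeeping summarised above.
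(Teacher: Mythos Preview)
Your normal--bundle computation is correct and is in fact the same content as the paper's key step, just packaged differently: the hypothesis on the $\beta$-string says $\langle\alpha,\beta^\vee\rangle=r-q\in\{-1,-2\}$, which you read as $\deg(\frakg_\alpha|_Z)\in\{1,2\}$, while the paper reads it as $\Ad(\bfc_{-\beta}^{2})x^{\alpha}=\pm x^{\alpha+\beta}$ or $\pm x^{\alpha+2\beta}\in\frakp$ (their Proposition on the Cayley transform). These are equivalent; indeed $\bfc_{\beta}^{2}(o)$ is exactly the antipodal point of $o$ on your curve $Z$, and ``$\Ad(\bfc_{-\beta}^{2})x^{\alpha}\in\frakp$'' says precisely that the global vector field $x^{\alpha}$, viewed as a section of $N_{Z/\check D}$, vanishes at that antipode---which is what positivity of $\frakg_\alpha|_Z$ provides.

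The gap you flag, however, is genuine and is not just a matter of bookkeeping. A relatively compact tube about $Z$ must also be bounded in the \emph{compact} normal directions $\frakg_\gamma|_Z$ with $\gamma\in\Delta(\frakk)\cap\Delta(\frakg^{<0})\setminus\{-\beta\}$, and the hypothesis says nothing about their $\beta$-strings; their degrees can be zero or negative. At a boundary point where the non-compact fibre coordinates all vanish and only some compact coordinate is nonzero, the Levi form of your $\rho$ in a non-compact direction $\frakg_\alpha$ reduces to the fibre Hessian $|ds_\alpha|^2>0$ (the curvature term, being bilinear in $s_\alpha$, vanishes there), and in the base direction it is governed by the curvature of the \emph{compact} line $\frakg_\gamma|_Z$, which you do not control. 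So there really are boundary points of a tube around $Z$ at which no negative eigenvalue is forced.

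The paper sidesteps this entirely by thickening the whole base cycle $C_0=K_{\RR}\cdot o$ rather than the curve $Z$. The set
\[
\calK=\Bigl\{\,k\!\prod_i\exp(\varepsilon_i x^{\alpha_i})\,z\ :\ k\in K_{\RR},\ z\in C_0,\ |\varepsilon_i|\le\varepsilon\,\Bigr\},
\]
with $\{\alpha_i\}=\Delta(\frakk^{\perp})\cap\Delta(\frakg^{<0})$, is a relatively compact $K_{\RR}$-invariant neighbourhood of $C_0$ whose only normal directions are the non-compact ones, so the uncontrolled compact directions never enter. The Cayley-transform identity above then gives, for $\xi=\prod_i\exp(\varepsilon_i x^{\alpha_i})$, that $\Ad(\bfc_{-\beta}^{2})\xi\in P$, hence $\xi$ fixes $\bfc_\beta^{2}(o)\in C_0$; thus every translate $k\xi C_0\subset\calK$ meets $C_0$. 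This is one-connectivity in Huckleberry's sense, and his disk construction (a $\PP^1$ inside $k\xi C_0$ through the boundary point, meeting $\Int(\calK)$ near the intersection with $C_0$) then produces the required disk at every boundary point. In short: your positivity statement is right, but the correct ambient object is a tube around $C_0$, and the cleanest way to exploit the positivity is the fixed-point formulation via $\bfc_\beta^{2}$ rather than a Levi-form estimate.
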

 \medskip

Our study is motivated by Hodge theory.
A rational Hodge structure defines a rational algebraic group  called a Mumford--Tate group, and an orbit of a real Mumford--Tate group is called a Mumford--Tate domain, which is a measurable flag domain (cf. \cite{GGK}).
For example, a period domain (cf. \cite{CG}) is a Mumford--Tate domain of a special kind.    
We give two examples of Mumford--Tate domain where the above theorem holds (Example \ref{SU21}--\ref{SO21}).

In the latter half of this paper, we discuss pseudoconcavity at a boundary point of $D$ in a real codimension-one $G_{\RR}$-orbit for a non-classical period domain $D$.
In this case $G_{\RR}$ is $Sp(2n)$ or $SO(p,q)$ depending on its Hodge numbers.
Now the boundary $\bd{(D)}$ in $\check{D}$ is the disjoint union of finitely many boundary $G_{\RR}$-orbits.
For a real codimension-one $G_{\RR}$-orbit $\calO$, we may consider a Levi form and define pseudoconcavity at a point in $\calO$.
By transitivity of $G_{\RR}$ acting on $D$, every point in $\calO$ is pseudoconcave boundary point of $D$ if one point in $\calO$ is.
We then say a codimension-one $G_{\RR}$-orbit $\calO$ is a pseudoconcave boundary orbit of $D$ if a point in $\calO$ is pseudoconcave boundary point of $D$.

Boundary $G_{\RR}$-orbits have a relationship with degeneration of Hodge structures.
By the nilpotent orbit theorem \cite{S}, degenerating Hodge structures over a product of punctured disks are asymptotically approximated by nilpotent orbits. 
A nilpotent orbit is generated by a pair consisting of $N\in \frakg_{\RR}$ and $\fil\in \check{D}$, and the limit point $\lim_{\Im{(z)}\to\infty}\exp{(zN)}=:\fil_{\infty}\in\check{D}$, which is called the reduced limit, is in a boundary $G_{\RR}$-orbit. 
A boundary $G_{\RR}$-orbit is said to be poralizable if it is a $G_{\RR}$-orbit of a reduced limit.
Remark that {\it not} every boundary $G_{\RR}$-orbits are polarizable.

Green, Griffiths and Kerr \cite{G} and Kerr and Pearlstein \cite{KP} showed that any codimension-one boundary $G_{\RR}$-orbit $\calO$ is polarizable.
We then have a nilpotent orbit $(N,\fil)$ so that the reduced limit $\fil_{\infty}$ is in $\calO$, which is called a {\it minimal degeneration}.
Minimal degenerations are studied by Green, Griffiths and Robles \cite{GGR}. 
In Theorem \ref{prop-per}, we will give a sufficient condition of $(N,\fil)$ for $\calO$ to be a pseudoconcave boundary orbit examining minimal degenerations.

\medskip  

In this paper, we do not deal with degree of pseudoconcavity, and we leave it open.
If a flag domain $D$ is $q$-pseudoconcave in the sense of Andreotti-Grauert, we have the finiteness theorem for cohomology groups of degree less than $\dim{D}-q-1$ with locally free sheaves.
In addition, it is speculated that the finiteness theorem is true for degree less than $\dim{C_0}$ in \cite[Lecture 10]{G} as an open question.

\section{Pseudoconcavity of complex manifolds}\label{pcc}
We recall pseudoconcavity of complex manifolds following \cite{A}.
Let $\Omega$ be an open subset of $\CC^n$ with a smooth boundary.
For every point $z_0$ in the boundary $\bd{ (\Omega)}$, we can find a neighborhood $U_{z_0}$ of $z_0$ and a $C^{\infty}$-function $\phi:U_{z_0}\to \RR$ such that
$$(\diff \phi)_{z_0}\neq 0,\quad \Omega\cap U_{z_0}=\{z\in U_{z_0}\; |\; \phi(z)<\phi(z_0)\}.$$ 
The real tangent plane at $z_0$ to $\bd{(\Omega)}$ contains the $(n-1)$-dimensional complex plane defined by the equation
$$\sum_k \frac{\partial \phi}{\partial z^k}(z_0)(z^k-z_{0}^k)=0$$
with the coordinate function $z^1,\ldots ,z^n$.
This is called the analytic tangent plane.
Since $\phi$ is real-valued, the quadratic form 
$$L(\phi)_{z_0}(z):=\sum_{k,\ell}\frac{\partial^2 \phi}{\partial z^k\partial \bar{z}^{\ell}}(z_0)z^k\bar{z}^\ell,$$
is Hermitian, which is called the Levi form.
The boundary point $z_0$ is said to be pseudoconcave if the Levi form has at least one negative eigenvalue on the analytic tangent plane. 
Remark that the number of positive/negative eigenvalues does not depend on the choice of coordinate function and defining function.
We may assume  $z_0=0$ and $\phi(0)=0$, and we may change the coordinate so that the Tayler expansion of $\phi$ at $0$ is
$$\phi(z)=2\Re(z^1)+L_0(\phi)(z)+O(\|z\|^3)$$
and the Levi form restricted to the analytic tangent plane $T_0(\bd{(\Omega)})$, defined by $z^1=0$, is 
$$L(\phi)_{0}|_{T_0(\bd{(\Omega)})}(z)=\sum_{k\geq 2}^n \lambda_k |z^k|^2$$
with eigenvalues $\lambda_2,\ldots ,\lambda_n$.
If $0$ is a pseudoconcave boundary point, we have at least one negative eigenvalue, then we have a holomorphic map $\rho :\mathbb{D}\to \cl{(\Omega)}$ from the unit disk such that   
$\rho(0)=0$ and $\bd{(\rho(\mathbb{D}))}\subset \Omega$.
On the other hand, if $0$ is not a pseudoconcave boundary point, we do not have such a holomorphic map.
We then define pseudoconcavity of complex manifolds as follows:
\begin{dfn}\label{pcc-dfn}
Let $X$ be a connected complex manifold.
We say $X$ is pseudoconcave if there exists a relatively compact open subset $U$ with a smooth boundary such that at every point $z\in \bd{(U)}$ a holomorphic map $\rho:\mathbb{D}\to \cl{(U)}$ where $\rho(0)=z$ and $\bd{(\rho(\mathbb{D}))}\subset U$ exists.   
\end{dfn}
\begin{exm}
\begin{enumerate}
\item Every compact complex connected manifold is pseudoconcave.
\item Let $Z$ be a compact complex connected manifold of dimension greater than $2$ and let $Y$ be a complex submanifold with $\dim{Y}\leq \dim{Z}-2$. Then $Z- Y$ is pseudoconcave.
\end{enumerate}
\end{exm}
\section{Flag domains}
We review flag domains and their root structure, and then we prove Theorem \ref{main}.
This proof is based on technique of \cite[\S 3.2]{Huc0}.
The key point of Huckleberry's proof is to construct a relatively compact neighborhood of the base cycle $C_0$ which is filled out by cycles and where any points are one-connected to $C_0$.
We construct such a neighborhood in Lemma \ref{thm}. 
Here Cayley transform associated with compact root plays important role.
\subsection{Parabolic subalgebras}
Let $\frakg$ be a complex semisimple Lie algebra, and let $\frakp$ be a parabolic subgroup.
Let $\frakg_{\RR}$ be a real form of $\frakg$.
We fix a Cartan subalgebra $\frakh_{\RR}\subset \frakg_{\RR}\cap \frakp$ and choose a Borel subgroup $\frakb\subset \frakp$ containing the Cartan subgroup $\frakh:=\frakh_{\RR}\otimes \CC$.
The Cartan subalgebra $\frakh$ determines a set $\Delta=\Delta(\frakg ,\frakh)\subset \frakh^*$ of roots.
A root $\alpha\in\Delta$ defines the root space $\frakg^{\alpha}$, and we have the root space decomposition $\frakg=\frakh\oplus\bigoplus_{\alpha\in\Delta}\frakg^{\alpha}$.
For a subalgebra $\mathfrak{s}$, we denote by $\Delta(\mathfrak{s})$ the subset of roots of which root spaces contained in $\mathfrak{s}$.
The Borel subalgebra $\frakb$ defines a positive root system by  
$$\Delta^+:=\Delta(\frakb)=\{\alpha\in\Delta\; |\; \frakg^{\alpha}\subset \frakb\}.$$  

Let $\sr=\{\sigma_1,\ldots ,\sigma_r\}$ be the set of simple roots, and let $\{\ttS^1,\ldots , \ttS^r\}$ be the dual basis to $\sr$.
An integral linear combination $\EE=\sum_{j}n_j\ttS^j$ is called a grading element.
The $\EE$-eigenspaces
$$\frakg^{\ell}=\bigoplus_{\alpha(\EE)=\ell}\frakg^{\alpha}, \quad \frakg^0=\frakh\oplus \bigoplus_{\alpha(\EE)=0}\frakg^{\alpha}$$
determines a graded Lie algebra decomposition $\frakg=\frakg^{-k}\oplus \cdots \oplus \frakg^{k}$ in the sense that $[\frakg^{\ell},\frakg^m]\subset \frakg^{\ell +m}$.
A grading $\EE$ defines a parabolic subalgebra $\frakp_{\EE}=\bigoplus_{\ell\geq 0}\frakg^{\ell}$.
On the other hand, setting
$$I(\frakp)=\{i\; |\; \frakg^{-\sigma_i}\not\subset \frakp\},$$
the parabolic subalgebra $\frakp_{\EE}$ defined by $\EE=\sum_{i\in I(\frakp)}\ttS^i$ coincides with $\frakp$.

We review Chevalley basis and their properties (cf. \cite[\S 25]{Hum}).
Since the Killing form defines a non-degenerate negative-definite symmetric form on $\frakh_{\RR}$, we have the induced form $(\bullet , \bullet)$ on $\frakh_{\RR}^*$.
Let $\alpha\in\Delta\cup\{0\}$ and let $\beta\in\Delta$.
The set of all members of $\Delta\cup\{ 0\}$ of the form $\alpha+n\beta$ for $n\in\ZZ$ is called the $\beta$-string containing $\alpha$.
Then the $\beta$-string containing $\alpha$ is given by
\begin{align}\label{string}
\{\alpha+n\beta\;|\; -r\leq n\leq q\}.
\end{align}
If $\alpha$ and $\beta$ are linearly independent, we have 
$$\langle\alpha,\beta\rangle:=2\frac{( \alpha, \beta)}{(\beta ,\beta)}=r-q.$$
We can choose $\ttH^{\alpha}\in\frakh$ and $x^{\alpha}\in\frakg^{\alpha}$ for all $\alpha\in \Delta$ satisfying
\begin{align*}
&[x^{\alpha},x^{-\alpha}]=\ttH^{\alpha},\quad [\ttH^{\beta},x^{\alpha}]=\langle \alpha,\beta\rangle x^{\alpha},\\
&[x^{\alpha},x^{\beta}]=
	\begin{cases}
	c_{\alpha,\beta}x^{\alpha +\beta}&\text{if }\alpha+\beta\in\Delta\\
	0&\text{if }\alpha +\beta\not\in \Delta
	\end{cases}\\
&\text{where }c_{\alpha,\beta}=-c_{-\alpha,-\beta}\in\ZZ.
\end{align*}
Here $c_{\beta,\alpha}=\pm(r+1)$ if $\alpha$ and $\beta$ are linearly independent and $\alpha+\beta\in\Delta$.
Now
$$\{x^{\alpha}\; |\; \alpha\in\Delta\}\cup \{\ttH^{\sigma}\; |\; \sigma\in\sr\}$$ 
is a basis of $\frakg$, which is called a Chevalley basis.
\begin{lem}[{\cite[\S 25.2]{Hum}}]\label{Jac}
If $\alpha$ and $\beta$ are linearly independent, then
$$[x^{-\beta},[x^{\beta},x^{\alpha}]]=q(r+1)x^{\alpha}$$
where $(r,q)$ is given by the form (\ref{string}).
\end{lem}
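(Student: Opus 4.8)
The statement is an identity about the adjoint action of the $\mathfrak{sl}_2$-triple attached to $\beta$ on the $\beta$-string through $\alpha$, so the plan is to reduce it to the standard representation theory of $\mathfrak{sl}(2)$. First I would record that $\{x^{\beta},x^{-\beta},\ttH^{\beta}\}$ is a standard $\mathfrak{sl}_2$-triple: by the Chevalley relations $[\ttH^{\beta},x^{\pm\beta}]=\pm\langle\beta,\beta\rangle x^{\pm\beta}=\pm 2x^{\pm\beta}$ and $[x^{\beta},x^{-\beta}]=\ttH^{\beta}$, using $\langle\beta,\beta\rangle=2$. Then I would regard the span $V=\bigoplus_{n=-r}^{q}\frakg^{\alpha+n\beta}$ of the string (\ref{string}) as a module over this $\mathfrak{sl}_2$ via the adjoint action, which is well defined because $\ad(x^{\pm\beta})$ sends $\frakg^{\alpha+n\beta}$ into $\frakg^{\alpha+(n\pm1)\beta}$.

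Next I would pin down $V$ exactly. Each root space is one-dimensional and $\ad(\ttH^{\beta})$ acts on $x^{\alpha+n\beta}$ by the weight $\langle\alpha,\beta\rangle+2n=(r-q)+2n$. The top member $x^{\alpha+q\beta}$ satisfies $\ad(x^{\beta})x^{\alpha+q\beta}=0$, since $\alpha+(q+1)\beta\notin\Delta$, so it is a highest weight vector of weight $(r-q)+2q=r+q$; the irreducible submodule it generates has dimension $(r+q)+1=\dim V$, whence $V$ is that irreducible module of highest weight $\lambda=r+q$. In particular $x^{\alpha}$, whose weight is $r-q=\lambda-2q$, is a nonzero multiple of the $q$-fold application of the lowering operator to $x^{\alpha+q\beta}$.

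The plan is then to read off the scalar by which $\ad(x^{-\beta})\,\ad(x^{\beta})$ acts on the one-dimensional space $\frakg^{\alpha}$. In the abstract irreducible module of highest weight $\lambda$, writing $v_0$ for the highest weight vector and $v_k$ for the result of applying the lowering operator $f$ to it $k$ times, one has the standard relation $e\,v_k=k(\lambda-k+1)\,v_{k-1}$, and hence $f e\,v_k=k(\lambda-k+1)\,v_k$. Taking $k=q$ and $\lambda=r+q$ gives the scalar $q(\lambda-q+1)=q(r+1)$. Because $\ad(x^{-\beta})\ad(x^{\beta})$ carries $\frakg^{\alpha}$ into itself and the $\mathfrak{sl}_2$-action on $V$ is exactly $\ad(x^{\pm\beta})$, the scalar it realizes on $\frakg^{\alpha}$ equals the one just computed, so $[x^{-\beta},[x^{\beta},x^{\alpha}]]=q(r+1)x^{\alpha}$.

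The computation is routine, and the two points I would treat with some care are minor. The first is the degenerate case $q=0$: then $\alpha+\beta\notin\Delta$ forces $[x^{\beta},x^{\alpha}]=0$, and both sides vanish in agreement with $q(r+1)=0$. The second is that one must not confuse $\ad(x^{\pm\beta})$ with the normalized raising and lowering operators of the abstract module; the Chevalley constants $c_{\beta,\alpha}$ and $c_{-\beta,\alpha+\beta}$ enter each individual step, but they cancel in the composite $fe$ precisely because it is a scalar on the one-dimensional weight space $\frakg^{\alpha}$. This intrinsic-scalar remark is what lets me avoid evaluating the individual structure constants, and it is the only step where a little argument is needed rather than a direct quotation of $\mathfrak{sl}_2$ theory.
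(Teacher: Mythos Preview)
Your argument is correct and is precisely the standard one: the paper does not give its own proof but simply cites \cite[\S 25.2]{Hum}, and what you wrote is exactly the derivation found there, viewing the $\beta$-string as the irreducible $\mathfrak{sl}_2$-module of highest weight $r+q$ and reading off the scalar from the identity $fe\,v_k=k(\lambda-k+1)v_k$ with $k=q$, $\lambda=r+q$. Your remarks on the degenerate case $q=0$ and on why the Chevalley constants cancel are also accurate and helpful.
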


\begin{exm}[$\frakg =\mathfrak{sl}_2(\CC)$]\label{SL2-1}
A basis of $\frakg_{\RR} =\mathfrak{sl}_2(\RR)$ is given by
\begin{align*}
x^{\alpha}  =\begin{pmatrix}
0&1\\0&0
\end{pmatrix},\quad
\ttH^{\alpha} =\begin{pmatrix}
1&0\\ 0&-1
\end{pmatrix},\quad
x^{-\alpha} =
\begin{pmatrix}
0&0\\ 1&0
\end{pmatrix}.
\end{align*}
Here $\frakh=\CC \ttH^{\alpha}$ is a Cartan subalgebra, and $x^{\pm\alpha}$ is in the root space $\frakg^{\pm\alpha}$ where $\alpha$ is the root given by $\alpha(\ttH^{\alpha})= 2$.
This triple satisfies
\begin{align}\label{SL2-triple}
[x^{\alpha},x^{-\alpha}]=\ttH^{\alpha}, \quad [\ttH^{\alpha},x^{\alpha}]= 2 x^{\alpha}, \quad [\ttH^{\alpha},x^{-\alpha}]= -2 x^{-\alpha}.
\end{align}
In this case, $\Delta=\{\pm\alpha\}$ and we set $\alpha$ as a positive root.
We define a grading $\ttE=\ttS$ where $\ttS$ is the dual of $\alpha$.
Then 
$$\frakg^1=\frakg^{\alpha},\quad \frakg^0=\frakh, \quad \frakg^{-1}=\frakg^{-\alpha},$$
and $\frakp_{\ttE}=\frakg^{1}\oplus \frakg^{0}$.
\end{exm}
In general, a triple in a semisimple Lie algebra satisfying (\ref{SL2-triple}) is called a standard $sl_2$-triple. 

\subsection{Flag domains and Cayley transforms}
In the Lie group level, we have the parabolic subgroup $P\subset G$ corresponding to $\frakp\subset \frakg$.
The homogeneous manifold $\check{D}=G/P$ is called a flag manifold. 
We fix a Cartan involution of $\frakg_{\RR}$.
Let $\frakp_x$ be the Lie algebra of the parabolic subgroup $P_x$ of $G$ stabilizing $x\in \check{D}$.
By \cite[Theorem 4.2.2]{FHW}, the $G_{\RR}$-orbit $G_{\RR}x$ is open if and only if $\frakp_x=\frakp_{\ttE}$ where
\begin{itemize}
\item $\frakg_{\RR}\cap\frakp_x$ contains a fundamental Cartan subalgebra $\frakh_{\RR}$, and
\item $\ttE$ is an integral linear combination of a set of simple roots for a system $\Delta^+\subset \Delta=\Delta(\frakg, \frakh)$ with $\frakh=\frakh_{\RR}\otimes \CC$ such that $\tau\Delta^+=-\Delta^+$ for the complex conjugation $\tau$.
\end{itemize}

An open $G_{\RR}$-orbit is called a flag domain.
Let $D$ be a flag domain contained in $\check{D}$, and fix a base point $o\in D$.
We may assume $\frakp=\frakp_o$.
There exists a fundamental Cartan subalgebra $\frakh_{\RR}\subset \frakg_{\RR}\cap \frakp$, a positive root system $\Delta^+$, and a grading $\ttE$ such that $\frakp=\frakp_{\ttE}$.
We may choose a Chevalley basis of $\frakg$.
We define the Cayley transform $\bfc_{\alpha}$ for $\alpha\in \Delta$ by
$$\bfc_{\alpha}=\exp{(\frac{\pi}{4}(x^{-\alpha}-x^{\alpha}))}.$$

\begin{rem}
Usually, Cayley transforms of the above form are defined for noncompact imaginary roots as \cite[\S VI.7]{Kn}.
We use a Cayley transform for a {\it compact} root in the proof of Theorem \ref{main}.
\end{rem}
\begin{exm} [$\frakg =\mathfrak{sl}_2(\CC)$]\label{SL2-2}
This is continuation of Example \ref{SL2-1}.
Let us consider the three real forms $\mathfrak{su}(2)$, $\mathfrak{su}(1,1)$, and $\mathfrak{sl}(2,\RR)$ of $\mathfrak{sl}(2,\CC)$.
We set
\begin{align*}
&u^{\alpha}:=x^{\alpha}-x^{-\alpha}=\begin{pmatrix}0&1\\-1&0\end{pmatrix}, \quad h^{1}:=\ii\ttH^{\alpha}=\ii\begin{pmatrix}1&0\\0&-1\end{pmatrix},\\
&v^{\alpha}:=\ii(x^{\alpha}+x^{-\alpha})=\ii\begin{pmatrix}0&1\\1&0\end{pmatrix},
\end{align*}
where $\ii=\sqrt{-1}$.
Then 
\begin{align*}
&\mathfrak{su}(2)=\RR u^{\alpha}+\RR h^1+\RR v^{\alpha},
\quad\mathfrak{su}(1,1)=\RR \ii u^{\alpha}+\RR h^1+\RR \ii v^{\alpha},\\
&\mathfrak{sl}(2,\RR)=\RR x^{\alpha}+\RR\ttH^{\alpha} +\RR x^{-\alpha}=\RR (u_{\alpha}-\ii v_{\alpha})+\RR \ii h^1+ \RR (u_{\alpha}+\ii v_{\alpha}).
\end{align*}
The $G$-orbit $\check{D}$ is $\PP^1(\CC)$ in this case.
Each real group orbits of $o=\begin{pmatrix}1\\0\end{pmatrix}\in\check{D}$ are 
\begin{align*}
\PP^1(\CC),\quad
\left\{\begin{pmatrix}1\\z\end{pmatrix}\; ;\;|z|<1\right\},\quad
\left\{\begin{pmatrix}1\\y\end{pmatrix}\; ;\;y\in\RR\right\}\cup \left\{\begin{pmatrix}0\\1\end{pmatrix}\right\}.
\end{align*}
In a case where $\frakg_{\RR}$ is $\mathfrak{su}(2)$ or $\mathfrak{su}(1,1)$, the Cartan subalgebra $\frakh_{\RR}=\frakp\cap\frakg_{\RR}=\RR h^1$ is compact and $\tau x^{\alpha}=x^{-\alpha}$ for the complex conjugate $\tau$.
On the other hand, in the case where $\frakg_{\RR}$ is $\mathfrak{sl}(2,\RR)$, the Cartan subalgebra $\frakh_{\RR}=\frakp\cap\frakg_{\RR}=\RR\ii h^1$ is noncompact and $\tau x^{\alpha}=x^{\alpha}$.
The Cayley transform for $\alpha$ is
$$\bfc_{\alpha}=\frac{1}{\sqrt{2}}\begin{pmatrix}1&-1\\1&1\end{pmatrix}.$$
We then have
$$\bfc_{\alpha}(o)=\begin{pmatrix}1\\1\end{pmatrix},\quad \bfc^2_{\alpha}(o)=\begin{pmatrix}0\\1\end{pmatrix}.$$
 \end{exm}

\begin{prop}\label{prop}
If $\alpha$ and $\beta$ are linearly independent, then 
\begin{align*}
&\Ad{(\bfc_{-\beta}^2)}x^{\alpha}=
\begin{cases}
\pm x^{\alpha+\beta}&\text{if }(r,q)=(0,1),\\
\pm x^{\alpha+2\beta}&\text{if }(r,q)=(0,2)
\end{cases}
\end{align*}
where $(r,q)$ is given by the form (\ref{string}).
\end{prop}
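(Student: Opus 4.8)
The plan is to reduce the statement to exponentiating the adjoint action of $X := x^{\beta}-x^{-\beta}$ on a single root-string module and then to carry out that exponentiation explicitly. Since $\bfc_{-\beta}=\exp(\tfrac{\pi}{4}(x^{\beta}-x^{-\beta}))$ by the definition of the Cayley transform (take $\alpha$ to be $-\beta$), we have $\bfc_{-\beta}^2=\exp(\tfrac{\pi}{2}X)$ and hence $\Ad(\bfc_{-\beta}^2)=\exp(\tfrac{\pi}{2}\ad X)$. The decisive observation is that the span $V:=\bigoplus_{n=-r}^{q}\frakg^{\alpha+n\beta}$ of the $\beta$-string containing $\alpha$ is preserved by $\ad(x^{\pm\beta})$ and $\ad(\ttH^{\beta})$, because $\ad(x^{\pm\beta})$ shifts the index $n$ by $\pm 1$; thus $V$ is an $\sl_2$-module on which $\ad X|_V$ is the image of the standard rotation generator. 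In both cases the hypothesis forces $r=0$, so $x^{\alpha}$ is the lowest vector of the string and $\ad(x^{-\beta})x^{\alpha}=0$, which simplifies every computation.

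Next I would write $\ad X|_V$ in the Chevalley basis. Setting $c_1=c_{\beta,\alpha}$, $c_2=c_{\beta,\alpha+\beta}$, $d_1=c_{-\beta,\alpha+\beta}$ and $d_2=c_{-\beta,\alpha+2\beta}$, the brackets $[x^{\pm\beta},x^{\alpha+n\beta}]$ produce a tridiagonal matrix, and the products of structure constants needed to exponentiate are supplied by Lemma \ref{Jac}: applied to $x^{\alpha}$ it gives $c_1 d_1=q(r+1)$, and applied to $x^{\alpha+\beta}$ (whose $\beta$-string has parameters $(r',q')=(1,1)$ in the $(0,2)$ case) it gives $c_2 d_2=q'(r'+1)=2$. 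In the two-dimensional case $(r,q)=(0,1)$ this yields $(\ad X)^2=-\mathrm{id}$ on $V$, whence $\exp(\tfrac{\pi}{2}\ad X)|_V=\ad X$ and $\Ad(\bfc_{-\beta}^2)x^{\alpha}=c_1\,x^{\alpha+\beta}$. In the three-dimensional case $(r,q)=(0,2)$ a short computation gives $(\ad X)^3=-4\,\ad X$ on $V$, so the Rodrigues-type identity gives $\exp(\tfrac{\pi}{2}\ad X)|_V=\mathrm{id}+\tfrac12(\ad X)^2$; applying this to $x^{\alpha}$ and using $(\ad X)^2 x^{\alpha}=-2x^{\alpha}+c_1 c_2\,x^{\alpha+2\beta}$ leaves $\Ad(\bfc_{-\beta}^2)x^{\alpha}=\tfrac12 c_1 c_2\,x^{\alpha+2\beta}$.

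It then remains to identify the scalar coefficients as $\pm 1$, and here the essential input is the normalization of a Chevalley basis. Using $c_{\beta,\gamma}=\pm(r+1)$ with $r$ read off from the $\beta$-string through $\gamma$: in the $(0,1)$ case $c_1=c_{\beta,\alpha}=\pm 1$ directly; in the $(0,2)$ case $c_1=c_{\beta,\alpha}=\pm 1$ (the string through $\alpha$ has $r=0$) while $c_2=c_{\beta,\alpha+\beta}=\pm 2$ (the string through $\alpha+\beta$ has $r=1$), so $\tfrac12 c_1 c_2=\pm 1$. This produces the two claimed formulas.

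I expect the main difficulty to be organizational rather than conceptual: one must keep the two strings — through $\alpha$ and through $\alpha+\beta$ — and their parameters $(r,q)$ straight, and verify the cubic relation $(\ad X)^3=-4\,\ad X$ on the three-dimensional module cleanly enough that the Rodrigues identity applies. Conceptually the computation is transparent, since $\bfc_{-\beta}^2$ is a representative, inside the subgroup attached to the $\beta$-triple, of the Weyl reflection $s_{\beta}$; it therefore sends $\frakg^{\alpha}$ to $\frakg^{s_{\beta}(\alpha)}=\frakg^{\alpha-\langle\alpha,\beta\rangle\beta}$, which is $\frakg^{\alpha+\beta}$ when $(r,q)=(0,1)$ and $\frakg^{\alpha+2\beta}$ when $(r,q)=(0,2)$. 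The genuine content is thus exactly the verification that the image of the Chevalley vector $x^{\alpha}$ is $\pm$ a Chevalley vector, which the structure-constant bookkeeping above achieves.
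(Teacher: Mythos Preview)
Your proposal is correct and follows essentially the same approach as the paper's proof: both compute iterated powers of $\ad(x^{\beta}-x^{-\beta})$ applied to $x^{\alpha}$, use Lemma~\ref{Jac} to evaluate the products of structure constants, and then sum the resulting exponential series with $t=\pi/2$. Your packaging is slightly more structural---you phrase the computation as operator identities $(\ad X)^2=-\mathrm{id}$ and $(\ad X)^3=-4\,\ad X$ on the full string module $V$, and you add the conceptual observation that $\bfc_{-\beta}^2$ represents the Weyl reflection $s_{\beta}$---whereas the paper works directly with iterated brackets on $x^{\alpha}$ alone and sums the series in $\cos$/$\sin$ form; but the underlying calculation and the use of the Chevalley normalization to identify the scalars as $\pm 1$ are identical.
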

\begin{proof}
First, we consider the case for $(r,q)=(0,1)$.
Since $\alpha-\beta\not\in\Delta$,
$$\ad{(x^{\beta}-x^{-\beta})}x^{\alpha}=\ad{(x^{\beta})}x^{\alpha}=c_{\beta,\alpha}x^{\alpha+\beta}.$$
By Lemma \ref{Jac}, 
$$[x^{-\beta},[x^{\beta},x^{\alpha}]]=x^{\alpha}.$$
Then 
$$(\ad{(x^{\beta}-x^{-\beta})})^2x^{\alpha}=-[x^{-\beta},[x^{\beta},x^{\alpha}]]=-x^{\alpha}.$$
Therefore
\begin{align*}
\Ad{(\exp{(\frac{\pi}{2}(x^{\beta}-x^{-\beta}))})}x^{\alpha}&=\sum_{\ell=0}^{\infty}\frac{(-1)^{2\ell}}{(2\ell)!}(\frac{\pi}{2})^{2\ell}x^{\alpha}+\sum_{k=0}^{\infty}\frac{(-1)^k}{(2k+1)!}(\frac{\pi}{2})^{2k+1} c_{\beta,\alpha}x^{\alpha+\beta}\\
&=\cos{(\frac{\pi}{2})}x^{\alpha}+\sin{(\frac{\pi}{2})} c_{\beta,\alpha}x^{\alpha+\beta}=c_{\beta,\alpha}x^{\alpha+\beta}
\end{align*}
Since $c_{\beta,\alpha}=\pm 1$, the equation holds.

Next, we consider the case for $(r,q)=(0,2)$.
As is the case for $(r,q)=(0,1)$, we have
\begin{align*}
\ad{(x^{\beta}-x^{-\beta})}x^{\alpha}=c_{\beta,\alpha}x^{\alpha+\beta},\quad [x^{-\beta},[x^{\beta},x^{\alpha}]]=2x^{\beta}.
\end{align*}
Then
$$(\ad{(x^{\beta}-x^{-\beta})})^2 x^{\alpha}=c_{\beta,\alpha+\beta}c_{\beta,\alpha}x^{\alpha+2\beta}-2x^{\alpha}.$$
By Lemma \ref{Jac},
\begin{align*}
[x^{\beta},[x^{-\beta},x^{\alpha +2\beta}]]=2x^{\alpha +2\beta}.
\end{align*}
On the other hand,
$$[x^{\beta},[x^{-\beta},x^{\alpha +2\beta}]]=c_{\beta ,\alpha +\beta}c_{-\beta ,\alpha +2\beta}x^{\alpha +2\beta}.$$
Then $c_{\beta ,\alpha +\beta}c_{-\beta ,\alpha +2\beta}=2$.
Therefore,
$$(\ad{(x^{\beta}-x^{-\beta})})^3 x^{\alpha}=-c_{-\beta,\alpha+2\beta}c_{\beta,\alpha+\beta}c_{\beta,\alpha}x^{\alpha+\beta}-2c_{\beta,\alpha}x^{\alpha+\beta}=-4c_{\beta,\alpha}x^{\alpha+\beta}.$$
To summarize the above calculations, we have
\begin{align*}
\Ad{(\exp{(\frac{\pi}{2}(x^{\beta}-x^{-\beta}))})}x^{\alpha}=x^{\alpha}&+\sum_{k=0}^{\infty}\frac{1}{(2k+1)!}(\frac{\pi}{2})^{2k+1}(-4)^k c_{\beta,\alpha}x^{\alpha+\beta}\\
&+\sum_{\ell=1}^{\infty}\frac{1}{(2\ell)!}(\frac{\pi}{2})^{2\ell}(-4)^{\ell -1}(c_{\beta,\alpha+\beta}c_{\beta,\alpha}x^{\alpha+2\beta}-2x^{\alpha}),
\end{align*}
where
\begin{align*}
&\sum_{k=0}^{\infty}\frac{1}{(2k+1)!}(\frac{\pi}{2})^{2k+1}(-4)^k =\frac{1}{2}\sum_{k=0}^{\infty}\frac{(-1)^k}{(2k+1)!}(\frac{\pi}{2})^{2k+1}2^{2k+1} =\frac{1}{2}\sin{\pi}=0,\\
&\sum_{\ell=1}^{\infty}\frac{1}{(2\ell)!}(\frac{\pi}{2})^{2\ell}(-4)^{\ell -1}=-\frac{1}{4}\sum_{\ell=1}^{\infty}\frac{(-1)^{\ell}}{(2\ell)!}(\frac{\pi}{2})^{2\ell}2^{2\ell }=-\frac{1}{4}(\cos{\pi}-1)=\frac{1}{2}.
\end{align*}
Hence
$$\Ad{(\exp{(\frac{\pi}{2}(x^{\beta}-x^{-\beta}))})}x^{\alpha}=\frac{1}{2}c_{\beta,\alpha+\beta}c_{\beta,\alpha}x^{\alpha+2\beta}.$$
Since $c_{\beta,\alpha+\beta}=\pm 2$ and $c_{\beta,\alpha}=\pm 1$, the equation holds.
\end{proof}
In the case for $(r,q)=(1,0)$ or $(r,q)=(2,0)$, the same equation holds if $\beta$ is replaced by $-\beta$.

\begin{rem}
The proof of Proposition \ref{prop} looks similar to the one of \cite[Proposition 6.72]{Kn}.
However, the latter one requires that $\beta$ is orthogonal to $\alpha$, i.e. $r=q$.
\end{rem}

\subsection{Proof of Theorem \ref{main}}\label{proof}
There exists a Cartan involution $\theta$ of $\frakg_{\RR}$ such that $\frakh_{\RR}$ is a $\theta$-stable fundamental Cartan subalgebra.
Let $\frakg_{\RR}=\frakk_{\RR}\oplus \frakk_{\RR}^{\perp}$ be the Cartan decomposition associated with $\theta$.
We set the base cycle $C_0=K_{\RR}o$ with the maximal compact subgroup $K_{\RR}$ corresponding to $\frakk_{\RR}$.
By \cite[Theorem 4.3.1]{FHW}, the base cycle $C_0$ is a compact complex manifold satisfying $C_0=Ko$ with $K=K_{\RR}\otimes \CC$.
\begin{lem}\label{thm}
Suppose that there exists a compact root $\beta$ satisfying the condition of Theorem \ref{main}.
We choose a sufficiently small  $\varepsilon >0$ so that 
$$
\calK:=\left\{\begin{array}{l|l}
k\prod_{i=1}^{\ell}\exp{(\varepsilon_i x^{\alpha_i})}z
&
\begin{array}{l}
k\in K_{\RR},\; z\in C_0\\
\sum_i|\varepsilon_i|^2\leq \varepsilon
\end{array}
\end{array}\right\}
$$
 is contained in $D$ where $\{\alpha_1,\ldots \alpha_{\ell}\}=\Delta(\frakk^{\perp})\cap\Delta (\frakg^{<0})$.
Then $\Int{(\calK)}$ is a relatively compact subset containing $C_0$.
Moreover, for every point $z'$ in $\calK$, there exists $gC_0\subset \calK$ with $g\in G$ such that $C_0\cap gC_0 \neq\emptyset$ and $z'\in gC_0$.
\end{lem}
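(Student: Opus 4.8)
The plan is to build the set $\calK$ explicitly and verify its two asserted properties (relative compactness containing $C_0$, and the one-connectivity of every point to $C_0$) by separating the compact directions from the unipotent directions. First I would note that $C_0=K_{\RR}o$ is compact because $K_{\RR}$ is a maximal compact subgroup and the isotropy at $o$ is contained in it; hence $C_0$ sits inside $D$, which is open, and so there is positive distance from $C_0$ to $\bd(D)$. The map
\begin{equation*}
(k,\varepsilon_1,\ldots,\varepsilon_{\ell},z)\longmapsto k\prod_{i=1}^{\ell}\exp(\varepsilon_i x^{\alpha_i})z
\end{equation*}
is continuous on the compact parameter space $K_{\RR}\times[-\varepsilon,\varepsilon]^{\ell}\times C_0$, so its image $\calK$ is compact, and at $\varepsilon_i=0$ it reduces to $K_{\RR}C_0=C_0$. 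Thus for $\varepsilon$ small the whole image stays within the $D$-neighborhood of $C_0$, which gives $\calK\subset D$ and $C_0\subset\calK$; since $\Int(\calK)$ is then a relatively compact open set still containing $C_0$ (the $\varepsilon_i=0$ locus lies in the interior once the transverse unipotent directions genuinely move points off $C_0$, which they do because the $x^{\alpha_i}$ lie in $\frakk^{\perp}$ and are transverse to the cycle), the first assertion follows.

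The substantive claim is the second one. Fix $z'=k\prod_i\exp(\varepsilon_i x^{\alpha_i})z\in\calK$ with $z\in C_0$ and $|\varepsilon_i|\le\varepsilon$. The idea is to exhibit a single translate $gC_0$ through $z'$ that also meets $C_0$. Set $g:=k\prod_{i=1}^{\ell}\exp(\varepsilon_i x^{\alpha_i})$. Then $z'=gz\in gC_0$ immediately. It remains to see that $gC_0\cap C_0\neq\emptyset$ and that $gC_0\subset\calK$. For the intersection, observe that $z\in C_0$ lies in $gC_0$ precisely when $g^{-1}z'=z\in C_0$, so one wants a point common to both cycles; the natural candidate is the image under $k$ of a point fixed by the unipotent flow, and here the hypothesis of Theorem~\ref{main} enters: the $\beta$-string condition guarantees, via Proposition~\ref{prop}, that the Cayley transform $\bfc_{-\beta}^2$ carries the negative root vector $x^{\alpha_i}$ into $\frakp$ up to sign, so that the one-parameter unipotent subgroups $\exp(\varepsilon_i x^{\alpha_i})$ move $o$ along directions that are reabsorbed into the isotropy after conjugation. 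I would use this to show that $\exp(\varepsilon_i x^{\alpha_i})o$ remains on a cycle $g_i C_0$ that still contains $o$ itself (or a $K_{\RR}$-translate of $o$), so that chaining through the compact factor $k\in K_{\RR}$ keeps the intersection with $C_0$ nonempty.

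For the containment $gC_0\subset\calK$, I would argue that translating $C_0$ by $\exp(\varepsilon_i x^{\alpha_i})$ and then by $k\in K_{\RR}$ produces points of exactly the prescribed form $k'\prod_j\exp(\varepsilon_j' x^{\alpha_j})z''$ with $|\varepsilon_j'|\le\varepsilon$ and $z''\in C_0$, using that $K_{\RR}$ normalizes $C_0$ and that the root-vector exponentials commute with the $K_{\RR}$-action on the cycle up to reindexing the $\alpha_i$ (again controlled by the root-system hypothesis, since $\beta$ being compact means $\exp(\beta\text{-flow})$ stays in $K_{\RR}$). The main obstacle I anticipate is precisely this bookkeeping: verifying that the transverse unipotent directions $\Delta(\frakk^{\perp})\cap\Delta(\frakg^{<0})$ are stable, as a set with bounded parameters, under the combined $K_{\RR}$- and Cayley-conjugation, so that $gC_0$ does not leak outside $\calK$ and the intersection with $C_0$ is genuinely realized rather than merely approached. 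This is where Proposition~\ref{prop} must be applied carefully string-by-string, distinguishing the length-two and length-three $\beta$-strings allowed in the hypothesis, since these control whether $\Ad(\bfc_{-\beta}^2)$ lands $x^{\alpha_i}$ on $x^{\alpha_i+\beta}$ or $x^{\alpha_i+2\beta}$ inside $\frakp$.
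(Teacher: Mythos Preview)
Your overall strategy matches the paper's: take $g=k\xi$ with $\xi=\prod_i\exp(\varepsilon_i x^{\alpha_i})$, so that $z'\in gC_0$ automatically, and then produce a point of $C_0\cap gC_0$ using the Cayley transform $\bfc_{-\beta}^2$ together with Proposition~\ref{prop}. However, two things need fixing.

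First, the containment $gC_0\subset\calK$ is not an obstacle at all; it is immediate from the definition of $\calK$. Every point of $k\xi C_0$ has the form $k\xi z$ with $k\in K_{\RR}$, $z\in C_0$, and the fixed parameters $|\varepsilon_i|\le\varepsilon$, which is exactly the defining form of elements of $\calK$. No bookkeeping about stability of root directions under $K_{\RR}$-conjugation is needed, and your anticipated ``main obstacle'' does not exist.

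Second, and more substantively, you do not actually complete the intersection argument. You correctly note that Proposition~\ref{prop} gives $\Ad(\bfc_{-\beta}^2)x^{\alpha_i}\in\frakp$ for each $i$, hence $\Ad(\bfc_{-\beta}^2)\xi\in P$. What is missing is the consequence: this means $\bfc_{-\beta}^2\xi\bfc_{-\beta}^{-2}$ fixes $o$, i.e.\ $\xi$ fixes the single point $\bfc_{\beta}^2(o)$. Since $\beta$ is \emph{compact}, $\bfc_{\beta}\in K$, and $K$ acts transitively on $C_0$, so $\bfc_{\beta}^2(o)\in C_0$. Thus $\bfc_{\beta}^2(o)\in C_0\cap\xi C_0$, and applying $k\in K_{\RR}$ gives $k\bfc_{\beta}^2(o)\in C_0\cap k\xi C_0$. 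Your phrasing ``$\exp(\varepsilon_i x^{\alpha_i})o$ remains on a cycle $g_iC_0$ that still contains $o$'' is not the right picture: it is not that individual one-parameter flows each preserve $o$, but that the whole product $\xi$ has a specific fixed point on $C_0$, namely $\bfc_{\beta}^2(o)$, and this is where compactness of $\beta$ is used.
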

\begin{proof}
Let $U_o$ be an open neighbourhood of $o$ in $C_0$.
Since $\{\prod_{i=1}^{\ell}\exp{(\varepsilon_i x^{\alpha_i})}z\; ;\; |\varepsilon_i|<\varepsilon,\; z\in U_o\}$ is an open neighborhood of $o$ in $D$ and $K_{\RR}$ acts on $C_0$ transitively, $\Int{(\calK)}$ is a relatively compact open subset containing $C_0$.
Let $z'=k\xi z\in\calK$ where $k\in K_{\RR}$, $z\in C_0$, and
$$\xi=\prod_{i=1}^{\ell}\exp{(\varepsilon_i x^{\alpha_i})}$$
with $\sum_i|\varepsilon_i |^2\leq \varepsilon$.
Since $z'\in k\xi C_0$, it is enough to show $k\xi C_0\cap C_0\neq \emptyset$.
Now we have
\begin{align*}
\AdG{(\bfc_{-\beta}^2)}\xi =\prod_{i=1}^{\ell}\exp{(\varepsilon_i \Ad{(\bfc^2_{-\beta})}x^{\alpha_i})}
\end{align*}
where $\AdG$ is the adjoint action on $G$.
By Proposition \ref{prop} and the hypothesis, $\Ad{(\bfc^2_{-\beta})}x^{\alpha_i}\in\frakp$, therefore $\AdG{(\bfc_{-\beta}^2)}\xi\in P$.
Then $\AdG{(\bfc_{-\beta}^2)}(\xi) o=o$, and hence $\xi \bfc_{\beta}^2(o)=\bfc_{\beta}^2(o)$.
Since $\bfc_{\beta}\in K$ and $K$ acts on $C_0$ transitively, $\bfc_{\beta}^2(o)\in C_0\cap \xi C_0$.
It concludes that $k\bfc_{\beta}^2(o)\in C_0\cap k\xi C_0$.
\end{proof}

By applying the above $\mathcal{K}$ to the proof of \cite[Theorem 3.7]{Huc0}, we complete the proof.
To show pseudoconcavity at any point $z\in\mathrm{bd}(\mathcal{K})$, we need to construct a disk $\mathbb{D}$ about $z$ such that $\bd{(\mathbb{D})}\subset \mathrm{int}(\mathcal{K})$.
By the above lemma, we have $ gC_0\subset\mathcal{K}$ with $g\in G$ containing $z$ such that there exists a point $z_0\in gC_0\cap C_0$.
We may construct $Z\subset gC_0$ with $z\in Z$ and $Z\cong \PP^1(\CC)$ so that $Z$ intersects an arbitrary neighborhood of $z_0$.
We then choose $y_0\in \mathrm{int}(\mathcal{K})\cap Z$ and define $\mathbb{D}$ to be the complement in $Z$ of the closure of a sufficiently small disk about $y_0$ so that $\mathrm{bd}(\mathbb{D})\subset \mathrm{int}(\mathcal{K})$.
Therefore $z$ is a pseudoconcave boundary point, and hence $D$ is pseudoconcave.

\begin{exm}\label{SU21}
Let $D$ be the Mumford--Tate domain with $G_{\RR}=SU(2,1)$ investigated by Carayol \cite{C}.
The root diagram is depicted in Figure \ref{root1}, where compact roots are those within a box and the shaded area is a Weyl Chamber.
\begin{figure}[h]
\begin{center}
\includegraphics{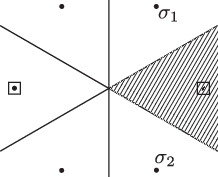}
\caption{The root diagram of $\mathfrak{sl}(3,\CC)$}
\label{root1}
\end{center}
\end{figure}
The parabolic subalgebra $\frakp$ is associated with $\ttE=\ttS^1+\ttS^2$.
Here $\{-\sigma_1,-\sigma_2\}=\Delta(\frakk^{\perp})\cap\Delta(\frakg^{<0})$, and $\sigma_1+\sigma_2\in\Delta(\frakk)$ satisfies the condition of Theorem \ref{main}.
Hence $D$ is pseudoconcave.
\end{exm}
\begin{exm}\label{SO21}
Let $D$ be the period domain with $h^{2,0}=2$ and $h^{1,1}=1$.
Then $G_{\RR}=SO(4,1)$, and the root diagram is  depicted in Figure \ref{root2}.
\begin{figure}[h]
\begin{center}
\includegraphics{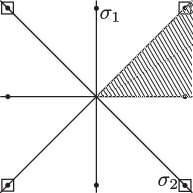}
\caption{The root diagram of $\mathfrak{so}(5,\CC)$}
\label{root2}
\end{center}
\end{figure}
The parabolic subalgebra $\frakp$ is associated with $\ttE=\ttS^1$.
Here $\{-\sigma_1,-\sigma_1-\sigma_2\}=\Delta(\frakk^{\perp})\cap\Delta(\frakg^{<0})$, and $2\sigma_1+\sigma_2\in\Delta(\frakk)$ satisfies the condition.
Hence $D$ is pseudoconcave.
\end{exm}

\section{Period domains}
We review minimal degeneration for period domains and prove Theorem \ref{prop-per}.
Strategy of this proof is similar to the one of Theorem \ref{main}.
We will show that a certain boundary point and an interior point  are connected by a single cycle $gC_0$, which induces pseudoconcavity at the boundary orbit.
We examine $sl_2$-triples associated with minimal degenerations for this proof.

In this section, $D$ is a period domain parametrizing Hodge structures of weight $n$ with Hodge numbers $\{h^{p,q}\}_{p+q=n}$ on a real vector space $V_{\RR}$ polarized by $Q$.
Here 
$$
G_\RR=\Aut{(V_{\RR},Q)}\cong
\begin{cases}
Sp(m,\RR)& \text{if }n\text{ is odd,}\\
SO(m_{\ev},m_{\od})& \text{if }n\text{ is even}
\end{cases}
$$
where $2m=\dim{V}$ if $n$ is odd, $m_{\ev}=\sum_{p\text{: even}}h^{p,q}$ and  $m_{\od}=\sum_{p\text{: odd}}h^{p,q}$ if $n$ is even.
We then have
$$
D\cong
\begin{cases}
Sp(m,\RR)/\prod_{p<k}U(h^{p,q})& \text{if }n=2k +1,\\
SO(m_{\ev},m_{\od})/(\prod_{p<k}U(h^{p,q})\times SO(h^{k,k}))& \text{if }n=2k.
\end{cases}
$$
If we choose a  reference point in $D$, then the $G_{\RR}$-orbit $D$ is a flag domain contained in the $G$-orbit with $G=\Aut{(V_{\CC},Q)}$.  
\subsection{Nilpotent orbits and $SL_2$-orbits}\label{nilp-sl2}
We recall the nilpotent orbit theorem and the $SL_2$-orbit theorem of \cite{S}.
A pair $(N,F^{\bullet})$ consisting of a nilpotent $N\in \frakg_{\RR}$, as an element of $\End{(V_{\RR})}$, and $F^{\bullet}\in\check{D}$ is called a nilpotent orbit if it satisfies the following conditions:
\begin{itemize}
\item $\exp{(zN)}F^{\bullet}\in D$ if $\Im{(z)}$ is sufficiently large;
\item $NF^{p}\subset F^{p-1}.$
\end{itemize}
For a nilpotent orbit $(N,\fil)$, there exists the monodromy weight filtration $\wf:=W_{\bullet}(N)[-n]$, and $(\wf ,\fil)$ is a mixed Hodge structure, which is called the limit mixed Hodge structure. 

Let $(N,\fil)$ be a nilpotent orbit such that  $(\wf,\fil)$ is split over $\RR$, i.e. the Deligne decomposition 
$$V_{\CC}=\bigoplus_{p,q}I^{p,q}$$
is defined over $\RR$.
We define $Y\in\frakg_{\RR}$ which acts on $I^{p,q}$ by the scalar $p+q-n$ and $N^+\in\frakg_{\RR}$ so that $(N^+,Y,N)$ is a $sl_2$-triple in $\frakg_{\RR}$.
The $SL_2$-orbit theorem guarantees existence of a homomorphism $v:SL(2,\RR)\to G_{\RR}$ such that
\begin{align*}
v_*\begin{pmatrix}
0&1\\ 0&0
\end{pmatrix}
=N^+,\quad
v_*\begin{pmatrix}
1&0\\ 0&-1
\end{pmatrix}
=Y,\quad
v_*\begin{pmatrix}
0&0\\1&0
\end{pmatrix}
=N
\end{align*}
and a $SL_2(\CC)$-equivalent horizontal holomorphic map $\psi:\PP^1(\CC)\to\check{D}$ given by
$$\begin{pmatrix}1\\z\end{pmatrix}\mapsto \exp{(zN)}\fil $$
Here $\exp{(zN)}\fil\in D$ for $\Im{(z)}>0$, and $\psi$ defines a $SL_2(\RR)$-equivalent map from the upper half plane to $D$.

Let
$$\bfd_N :=\exp{(\ii\frac{\pi}{4}(N^+ +N))}.$$
We then have
$$\varphi:=\bfd_N(\fil)=\psi(\frac{1}{\sqrt{2}}\begin{pmatrix}1&\ii\\ \ii&1\end{pmatrix}\begin{pmatrix}1\\0\end{pmatrix})=\psi(\begin{pmatrix}1\\\ii\end{pmatrix})\in D,$$
which defines the Hodge decomposition 
$$V_{\CC}=\bigoplus V^{p,n-p}$$
and homomorphism $\varphi: S^1= \{z\in\CC\; ;\; |z|=1\}\to G_{\RR}$ of real algebraic groups given by
$$\varphi(z)v=z^{2p-n}v\quad \text{for }v\in V^{p,n-p}.$$
The image $\varphi(S^1)$ is contained in a compact maximal torus $T$ (\cite[Proposition IV.A.2]{GGK}), and we define the Cartan subalgebra $\frakh=\frakt\otimes \CC$.
The associated grading element is $\ttE_{\varphi}= \frac{1}{4\pi\ii}\varphi'(1)\in \frakt$, which satisfies 
$$\ttE_{\varphi}v=\frac{2p-n}{2}v\quad \text{for }v\in V^{p,n-p}.$$

Now $\varphi$ defines a weight-$0$ real Hodge structure on $\frakg$ with the polarization given by minus the Killing form $B$.
Here the $(p,-p)$-component is the eigenspace 
$$\frakg^p=\{X\in\frakg\;|\; \ad{(\ttE_{\varphi})}X=pX\}=\{X\in\frakg\;|\; XV^{k,n-k}\subset V^{k+p,n-k-p}\}.$$
The Lie algebra of the parabolic subgroup stabilizing $\varphi$ is $\frakp=\bigoplus_{p\geq 0}\frakg^{p}.$
Since $\varphi(\ii)$ is the Weil operator, $-B(\varphi(\ii)\bullet , \bullet)$ is positive definite.
Therefore $\Ad{(\varphi(\ii))}$ is an Cartan involution defined over $\RR$, which defines the Cartan decomposition
$\frakg_{\RR}=\frakk_{\RR}\oplus \frakk^{\perp}_{\RR}$
such that
$$\frakk_{\RR}\otimes\CC=\bigoplus_{p\text{: even}}\frakg^{p},\quad \frakk^{\perp}_{\RR}\otimes \CC=\bigoplus_{p\text{: odd}}\frakg^{p}.$$ 
The maximal compact subalgebra $\frakk_{\RR}$ satisfies $\frakt\subset \frakg_{\RR}\cap\frakp\subset \frakk_{\RR}$, i.e. $D$ is measurable in the sense of \cite[\S 4.5]{FHW}.

We set the $sl_2$-triple
$$(\bar{\mathcal{E}},\mathcal{Z},\mathcal{E}):=\Ad_{\bfd_N}(N^+,Y,N)$$
in $\frakg_{\CC}$.
Then 
\begin{align*}
\bar{\calE}=\frac{1}{2}(N+N^+-\ii Y),\quad \calZ =\ii (N-N^+),\quad\calE =\frac{1}{2}(N+N^++\ii Y).
\end{align*}
Since $NF^p\subset F^{p-1}$ by the property of nilpotent orbits, we have
$$\bar{\mathcal{E}}\in\frakg^{1},\quad \mathcal{Z}\in\frakg^0,\quad \mathcal{E}\in\frakg^{-1}.$$

Let $V_{\CC}=\bigoplus V^{\mu}$ be the weight space decomposition with respect to $\frakh$.
That is, $\mu\in\frakh^{*}$ and $v\in V^{\mu}$ if and only if $\xi(v)=\mu(\xi)v$ for all $\xi\in\frakh$.
Then we have 
$
V^{p,n-p}=\bigoplus_{\mu(\ttE_{\varphi})=(2p-n)/2}V^{\mu}.$
The infinitesimal period relation is bracket-generating in the sense of \cite[\S 3.12]{R}.
Then we have
\begin{align}\label{Del-Hdg}
\bfd_N (I^{p,q})=\bigoplus_{\substack{\mu(\ttE_{\varphi})=(2p-n)/2\\ \mu(\mathcal{Z})=p+q-n}}V^{\mu}=\{v\in V^{p,n-p}\;|\; \mathcal{Z}v=(p+q-n)v\}.
\end{align}
by \cite[Theorem 5.5(d)]{R}
 and \cite[Theorem 5.9(b)]{R}.
\subsection{Minimal degenerations}
For a nilpotent orbit $(N,\fil)$, we have the reduced limit 
$$\fil_{\infty}=\lim_{\Im{(z)}\to \infty}\exp{(zN)}\fil\in\bd{(D)}.$$
A nilpotent orbit $(N,\fil)$ is called a minimal degeneration if $\fil_{\infty}$ is in a real codimension-one boundary $G_{\RR}$-orbit of $D$. 
Type of minimal degenerations is classified into two kinds as follows:
\begin{thm}[\cite{GGR} Theorem 1.7]
A minimal degeneration in a period domain is either 
\begin{itemize}
\item[Type I:] $N\neq 0$, $N^2=0$ and $\rank{N}=1,2$, or
\item[Type II:] $N^2\neq 0$, $N^3=0$ and $\rank{N}=2$.
\end{itemize}  
Moreover, type of minimal degeneration is determined by $\{i^{p,q}\}$, where  $i^{p,q}=\dim{I^{p,q}}$, and $\{h^{p,q}\}$ (see also Figure \ref{fig2}):
\begin{itemize}
\item[(I)] $(N,\fil)$ is of type I if and only if there exists $p_o\in\ZZ$ such that $2p_o<n$ satisfying the following conditions:
\begin{itemize}
\item[(i)] $i^{p_o+1,n-p_o},i^{p_o,n-p_o-1}=1$;
\item[(ii)] $i^{p_o,n-p_o}=h^{p_o,n-p_o}-1$ and $i^{p_o+1,n-p_o-1}=h^{p_o+1,n-p_o-1}-1$;
\item[(iii)] for all other $p$ such that $2p<n$, $i^{p,n-p}=h^{p,n-p}$.
\end{itemize}
\item[(II)]  $(N,\fil)$ is of type II if and only if $n=2m$ is even and it satisfies the following conditions:
\begin{itemize}
\item[(i)] $i^{m-1,m-1}, i^{m+1,m+1}=1$;
\item[(ii)] $i^{m-1,m+1}=h^{m-1,m+1}-1$ and $i^{m+1,m-1}=h^{m+1,m-1}-1$;
\item[(iii)] for all other $p$ such that $2p<n$, $i^{p,n-p}=h^{p,n-p}$.
\end{itemize}
\end{itemize}
\end{thm}
\begin{figure}[h]
\begin{center}
\includegraphics{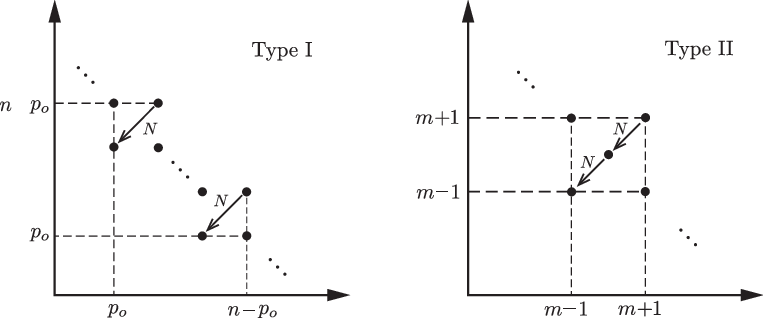}
\caption{Deligne decompositions for minimal degenerations}
\label{fig2}
\end{center}
\end{figure}


Let $(N,\fil)$ be a minimal degeneration such that the limit mixed Hodge structure is $\RR$-split.
In this case, $N$ and $N^+$ are root vectors and $\bfd_N$ is the Cayley transform (see \cite[Appendix to Lecture 10]{G} or \cite{KP}).

\begin{lem}\label{rho}
(1) Suppose that $(N,\fil)$ is of type I. 
Let $v\in I^{p_o+1,n-p_o}$.
We then have
\begin{align*}
&\bfd_N (v)=\frac{1}{\sqrt{2}}(v+\ii Nv), &\bfd_N (Nv) =\frac{\ii}{\sqrt{2}}( v-\ii Nv),\\
&\bfd_N (\bar{v})=\ii \cdot\overline{\bfd_N (Nv)}, &\bfd_N (N\bar{v}) =\ii \cdot\overline{\bfd_N (v)}.
\end{align*}
In particular, if $n=2p_o+1$ and $v\in I^{p_o+1,p_o+1}\cap V_{\RR}$, $\bfd_N(v)=\ii \cdot\overline{\bfd_N (Nv)}$.

(2) Suppose that $(N,\fil)$ is of type II.
Let $v\in I^{m+1,m+1}$.
We then have
\begin{align*}
\bfd_N(v)=\frac{1}{2}v+\frac{1}{2}\ii Nv -\frac{1}{4} N^2v,\quad \bfd_N(Nv)=\ii (v+\frac{1}{2}N^2v),\quad\bfd_N(N^2v)=-2\overline{\bfd_N(v)}
\end{align*}
\end{lem}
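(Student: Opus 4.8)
The plan is to reduce the whole computation to the representation theory of the standard $sl_2$-triple $(N^+,Y,N)$, for which $[N^+,N]=Y$, $[Y,N^+]=2N^+$ and $[Y,N]=-2N$, and then to exponentiate $\bfd_N=\exp(\ii\frac{\pi}{4}(N^++N))$ on a single irreducible summand. The starting observation is that $Y$ acts on $I^{p,q}$ by the scalar $p+q-n$, so $Y$ is precisely the weight operator of the $sl_2$-action, while $N$ lowers $(p,q)$ by $(1,1)$. For type I we have $N^2=0$, so every irreducible summand of $V_{\CC}$ has dimension at most $2$; the vector $v\in I^{p_o+1,n-p_o}$ has $Y$-weight $+1$ and must therefore be a highest weight vector, giving $N^+v=0$, $N^2v=0$, and $N^+(Nv)=v$. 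For type II we have $N^3=0$ and $N^2\neq0$, so summands have dimension at most $3$; the vector $v\in I^{m+1,m+1}$ has $Y$-weight $+2$, hence is highest weight, giving $N^+v=0$, $N^3v=0$, $N^+(Nv)=2v$ and $N^+(N^2v)=2Nv$. Thus $\{v,Nv\}$ (resp. $\{v,Nv,N^2v\}$) spans an irreducible module on which all the needed brackets are explicit.

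Next I would set $A:=N^++N$ and record its action on this summand. In type I one checks $Av=Nv$ and $A(Nv)=v$, so $A^2=\mathrm{id}$ and hence $\bfd_N=\cos\frac{\pi}{4}\cdot\mathrm{id}+\ii\sin\frac{\pi}{4}\cdot A=\frac{1}{\sqrt2}(\mathrm{id}+\ii A)$; applying this to $v$ and to $Nv$ gives the first two displayed formulas at once. In type II one computes $Av=Nv$, $A(Nv)=2v+N^2v$, $A(N^2v)=2Nv$, whose characteristic polynomial is $-\lambda(\lambda^2-4)$, so $A^3=4A$. Summing the exponential series using $A^{2k+1}=4^kA$ and $A^{2k}=4^{k-1}A^2$ (for $k\ge1$) and evaluating $\frac12\sinh(\ii\frac{\pi}{2})=\frac{\ii}{2}$ and $\frac14(\cosh(\ii\frac{\pi}{2})-1)=-\frac14$ yields $\bfd_N=\mathrm{id}+\frac{\ii}{2}A-\frac14A^2$ on the summand; applying this to $v$, $Nv$ and $N^2v$ produces the three type II formulas.

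Finally, for the conjugation identities I would use that $N^+,N\in\frakg_{\RR}$ are real, so $\overline{\bfd_N}=\exp(-\ii\frac{\pi}{4}(N^++N))$ and $N\bar v=\overline{Nv}$. Conjugating the formulas for $\bfd_N(Nv)$ and $\bfd_N(v)$ and comparing with $\bfd_N(\bar v)$ and $\bfd_N(N\bar v)$ — each computed from the irreducible summand through $\bar v$, which again carries $Y$-weight $+1$ — gives $\bfd_N(\bar v)=\ii\cdot\overline{\bfd_N(Nv)}$ and $\bfd_N(N\bar v)=\ii\cdot\overline{\bfd_N(v)}$. The "in particular" clause, together with the type II relation $\bfd_N(N^2v)=-2\overline{\bfd_N(v)}$, then follows from the fact that the diagonal components $I^{p_o+1,p_o+1}$ and $I^{m+1,m+1}$ are one-dimensional (conditions (I)(i) and (II)(i)) and defined over $\RR$, so $v$ may be taken real and $\bar v=v$. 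The main obstacle I anticipate is not the exponentiation, which is routine once $A^2$ or $A^3$ is known, but keeping the interaction between complex conjugation, the real structure on the diagonal Deligne pieces, and the labelling of summands consistent; in particular one must justify that $v$ really is a highest weight vector, for which the nilpotency constraints $N^2=0$ (type I) and $N^3=0\neq N^2$ (type II), combined with the $Y$-weight, are exactly what is needed.
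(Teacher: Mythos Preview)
Your proposal is correct and follows essentially the same route as the paper: both compute the action of $A=N^++N$ on the irreducible $sl_2$-summand through $v$, use $A^2=\mathrm{id}$ (type I) or $A^3=4A$ (type II) to sum the exponential series, and handle the conjugation identities by applying the same formula to $\bar v$ together with $v=\bar v$ on the diagonal pieces. If anything, your justification of why $v$ may be taken real (via the one-dimensionality and $\RR$-structure of $I^{p_o+1,p_o+1}$ and $I^{m+1,m+1}$ in the split case) is more explicit than the paper's, which simply asserts $v=\bar v$ when $n=2p_o+1$ and uses it tacitly in the type II identity $\bfd_N(N^2v)=-2\overline{\bfd_N(v)}$.
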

\begin{proof}
First, we prove (1).
By property of limit mixed Hodge structure, $N$ is a $(-1,-1)$-endomorphism and the nil-positive element $N^+$  is a $(1,1)$-endomorphism of the limit mixed Hodge structure $(\wf,\fil)$.
Moreover, $N$ is an isomorphism from $I^{p_o+1,n-p_o}$ to the image.
Then
$$(N^++N)v=Nv,\quad (N^++N)^2v=N^+Nv=[N^+,N]v=Yv=v.$$
and we have
\begin{align*}
\bfd_N(v)=&\sum_{\ell =0}^{\infty}\frac{(-1)^\ell}{(2\ell) !}(\frac{\pi}{4})^{2\ell}v+\sum_{k=0}^{\infty}\frac{(-1)^k}{(2k+1) !}(\frac{\pi}{4})^{2k+1}\ii Nv\\
&=\cos{(\frac{\pi}{4})}v+\ii\sin{(\frac{\pi}{4})}Nv=\frac{1}{\sqrt{2}}(v+\ii Nv),\\
\bfd_N(Nv)=&\sum_{\ell =0}^{\infty}\frac{(-1)^\ell}{(2\ell) !}(\frac{\pi}{4})^{2\ell}Nv+\sum_{k=0}^{\infty}\frac{(-1)^k}{(2k+1) !}(\frac{\pi}{4})^{2k+1}\ii v\\
&=\cos{(\frac{\pi}{4})}Nv+\ii\sin{(\frac{\pi}{4})}v=\frac{\ii}{\sqrt{2}}( v-\ii Nv).
\end{align*}
The equations for $\bfd_N (\bar{v})$ and $\bfd_N (N\bar{v})$ follows from similar calculations.

Next, we prove (2).
By property of limit mixed Hodge structure,
\begin{align}\label{cal1}
&N^+Nv=[N^+,N]v=Yv=2v,\nonumber\\
&N^+N^2v=[N^+,N]Nv+NN^+Nv=YNv+2Nv=2Nv.\nonumber\\
&(N^++N)v=Nv,\quad (N^++N)^2v=N^+Nv+N^2v=2v+N^2v,\\
&(N^++N)^3v=2Nv+N^+N^2v=4Nv.\nonumber
\end{align}
Then
\begin{align*}
\bfd_N (v)=v+\sum_{k=0}^{\infty}\frac{(-1)^k}{(2k+1) !}(\frac{\pi}{4})^{2k+1}4^k\ii Nv+\sum_{\ell =1}^{\infty}\frac{(-1)^\ell}{(2\ell) !}(\frac{\pi}{4})^{2\ell}4^{\ell -1}(2v+N^2v).
\end{align*}
Here
\begin{align}\label{cal2}
&\sum_{k=0}^{\infty}\frac{(-1)^k}{(2k+1) !}(\frac{\pi}{4})^{2k+1}4^k=\frac{1}{2}\sum_{k=0}^{\infty}\frac{(-1)^k}{(2k+1) !}(\frac{\pi}{4})^{2k+1}2^{2k+1}=\frac{1}{2}\sin{(\frac{\pi}{2})}=\frac{1}{2}\\
&\sum_{\ell =1}^{\infty}\frac{(-1)^\ell}{(2\ell) !}(\frac{\pi}{4})^{2\ell}4^{\ell -1}=\frac{1}{4}\sum_{\ell =1}^{\infty}\frac{(-1)^\ell}{(2\ell) !}(\frac{\pi}{4})^{2\ell}2^{2\ell }=\frac{1}{4}(\cos{(\frac{\pi}{2})}-1)=-\frac{1}{4}.\nonumber
\end{align}
Therefore,
$$\bfd_N(v)=\frac{1}{2}v+\frac{1}{2}\ii Nv -\frac{1}{4} N^2v.$$
Since we have $(N^++N)Nv=(N^++N)^2v$, by using  (\ref{cal1}) and (\ref{cal2})
\begin{align*}
\bfd_N(Nv) &=\sum_{\ell =0}^{\infty}\frac{(-1)^\ell}{(2\ell) !}(\frac{\pi}{4})^{2\ell}4^{\ell }(Nv)+\sum_{k=0}^{\infty}\frac{(-1)^k}{(2k+1) !}(\frac{\pi}{4})^{2k+1}4^k\ii (2v+N^2v)\\
&=\cos{(\frac{\pi}{2})}Nv+\frac{\ii}{2} (2v+N^2v)=\ii (v+\frac{1}{2}N^2v).
\end{align*}
Since we have $(N^++N)N^2v=N^+N^2v=2Nv$, by using (\ref{cal1}) and (\ref{cal2})
\begin{align*}
\bfd_N(N^2v) &=N^2v+\sum_{k=0}^{\infty}\frac{(-1)^k}{(2k+1) !}(\frac{\pi}{4})^{2k+1}4^k 2\ii Nv+\sum_{\ell =1}^{\infty}\frac{(-1)^\ell}{(2\ell) !}(\frac{\pi}{4})^{2\ell}4^{\ell -1}2(2v+N^2v)\\
&=N^2v+\ii Nv-\frac{1}{2}(2v+N^2v)=\frac{1}{2}N^2v+\ii Nv-v=-2\overline{\bfd_N(v)}.
\end{align*}
\end{proof}

%


\subsection{Pseudoconcave boundary orbits}
For a point in a codimension-one boundary orbit, we may define a local defining function.
Considering its Levi form, we can define pseudoconcavity at a point in a codimension-one boundary orbit as Definition \ref{pcc-dfn}:
\begin{dfn}
A codimension-one boundary $G_{\RR}$-orbit $\mathcal{O}$ in $\bd{(D)}$ is pseudoconcave boundary orbit of $D$ if a holomorphic map $\rho:\mathbb{D}\to \cl{(D)}$ where $\rho(0)\in\mathcal{O}$ and $\bd{(\rho(\mathbb{D}))}\subset D$ exists.   
\end{dfn}
Let $\mathcal{O}$ be a codimension-one boundary $G_{\RR}$-orbit.
We have a minimal degeneration $(N,\fil)$ where the associated reduced limit $\fil_{\infty}$ is in $\calO$.

\begin{thm}\label{prop-per}
Suppose that there exists $r\in \ZZ$ such that $i^{r,n-r}\neq 0$ satisfying one of the following conditions: 
\begin{enumerate}
\item[(i)] $(N,\fil)$ is of type I and $r=p_o+2\ell$ or  $r=p_o-2\ell +1$ ($\ell\geq 1$);
\item[(ii)] $(N,\fil)$ is of type II  and $r=m+2\ell+1$ ($\ell\in\ZZ$).
\end{enumerate}
Then $\calO$ is a pseudoconcave boundary orbit of $D$. 
\end{thm}
\begin{exm}\label{Sp4}
Let $D$ be the period domain with $h^{3,0}=h^{2,1}=1$.
This is the period domain for quintic-mirror threefolds.
In this case, nontrivial nilpotent orbits and the Deligne--Hodge numbers of the limit mixed Hodge structures are classified into the following three types:
\begin{itemize}
\item[(I)] $N^2=0$ and $\dim{(\Im{N})}=1$ ($i^{2,2}=i^{1,1}=i^{3,0}=i^{0,3}=1$);
\item[(II)] $N^2=0$ and $\dim{(\Im{N})}=2$ ($i^{3,1}=i^{1,3}=i^{2,0}=i^{0,2}=1$);
\item[(III)] $N^3\neq 0$ and $N^4=0$ ($i^{3,3}=i^{2,2}=i^{1,1}=i^{0,0}=1$).
\end{itemize} 
Here (I) and (II) are minimal degeneration of type I.
Moreover (I) satisfies the condition of Theorem \ref{prop-per}, and then the corresponding codimension-one boundary $G_{\RR}$-orbit is a pseudoconcave boundary orbit.

\end{exm}
\subsection{Proof of Theorem \ref{prop-per}}
For a nilpotent orbit $(N,\fil)$, there uniquely exists $\tilde{F}^{\bullet}\in \check{D}$ such that the limit mixed Hodge structure $(\wf,\tilde{F}^{\bullet})$ is $\RR$-split by \cite[Proposition 2.20]{CKS}. 
Since $\calO=G_{\RR}F^{\bullet}_{\infty}=G_{\RR}\tilde{F}^{\bullet}_{\infty}$ by \cite[\S 5.1]{KP}, we may choose $\fil\in \check{D}$ so that $\fil=\tilde{F}^{\bullet}$.

We have $\varphi=\bfd_N(\fil)\in D$ as in \S \ref{nilp-sl2}.
Let $K_{\RR}$ be the maximal compact subgroup containing $G_{\RR}\cap P$ where $P$ is the parabolic subgroup of $G$ stabilizing $\varphi$.
We define the base cycle $C_0=K_{\RR}\varphi$.
For $\calE=\Ad_{\bfd_N}(N)$ we have
$$\exp{(\ii\calE)}\varphi=\psi(\frac{1}{\sqrt{2}}\begin{pmatrix}1&\ii\\ \ii&1\end{pmatrix}\begin{pmatrix}1\\\ii\end{pmatrix})=\fil_{\infty}.$$
By Lemma \ref{mainLem}, $\exp{(\ii\calE)}C_0\ni\fil_{\infty}$ and $\exp{(\ii\calE)}C_0\cap C_0\neq \emptyset$.
We may construct $Z\subset gC_0$ and a disk $\mathbb{D}\subset Z$ about $\fil_{\infty}$ satisfying $\bd{(\mathbb{D})}\subset D$ applying the discussion of the proof of \cite[Theorem 3.7]{Huc0}.

Hence we prove Lemma \ref{mainLem}, which completes the proof of Theorem \ref{prop-per}.
\begin{lem}\label{mainLem}
If $r\in \ZZ$ satisfying the condition of Theorem \ref{prop-per} exists, then there exists $k\in K$ such that $\Ad{(k)}(\calE)\in\frakp$.
In particular, $k^{-1}\varphi\in C_0$ is a fixed point of $\exp{(\lambda \calE)}$ with $\lambda\in \CC$.
\end{lem}
\begin{proof}
We first consider the case for type I. 
Let $v\in I^{p_o+1,n-p_o}$.
By Lemma \ref{rho}
\begin{align}\label{decomp}
& \calE \bfd_N(v)=\bfd_N(Nv),\quad \overline{\calE \bfd_N(v)}=-\ii\bfd_N(\bar{v}),\\
&\calE\cdot\overline{\calE \bfd_N(v)}=-\ii\bfd_N(N\bar{v})=\overline{\bfd_N (v)}.\nonumber
\end{align}
In particular, if $n=2p_o+1$ and $v\in I^{p_o+1,p_o+1}\cap V_{\RR}$, then
\begin{align}\label{decomp2}
\bfd_N (v)=\ii\cdot\overline{\calE \bfd_N(v)},\quad \calE\cdot\overline{\calE \bfd_N(v)}=-\ii \calE \bfd_N(v).
\end{align}
We put $w:=\calE \bfd_N(v)$.
We may assume $\|w\|=1$ for the Hodge norm $\|\bullet \| =Q(\varphi(\ii)\bullet , \bar{\bullet})$.
Let $r=p_o+2\ell$ with $\ell\geq 1$ and $i^{r,n-r}\neq 0$.
Since $I^{r,n-r}\subset W^n$ is contained in $\Ker{N}$ by property of monodromy weight filtration, $I^{r,n-r}= \bfd_N(I^{r,n-r})\subset V^{r,n-r}$.
We choose $u\in I^{r,n-r}$ so that $\| u\|=1$.
We define $k\in \Aut{(V_{\CC})}$ by
\begin{align}\label{cpt}
&kw=u,\quad ku=w,\quad k\bar{u}=\bar{w},\quad k\bar{w}=\bar{u},\\
&kv'=v'\quad \text{if } v'\perp w,u,\bar{w}\text{ and }\bar{u} \text{ with respect to }Q.\nonumber
\end{align}
Then $k\in G$, and $\Ad{(\varphi(\ii))}k=k$, i.e. $k\in K$ (see Figure \ref{fig3}).
\begin{figure}[h]
\begin{center}
\includegraphics{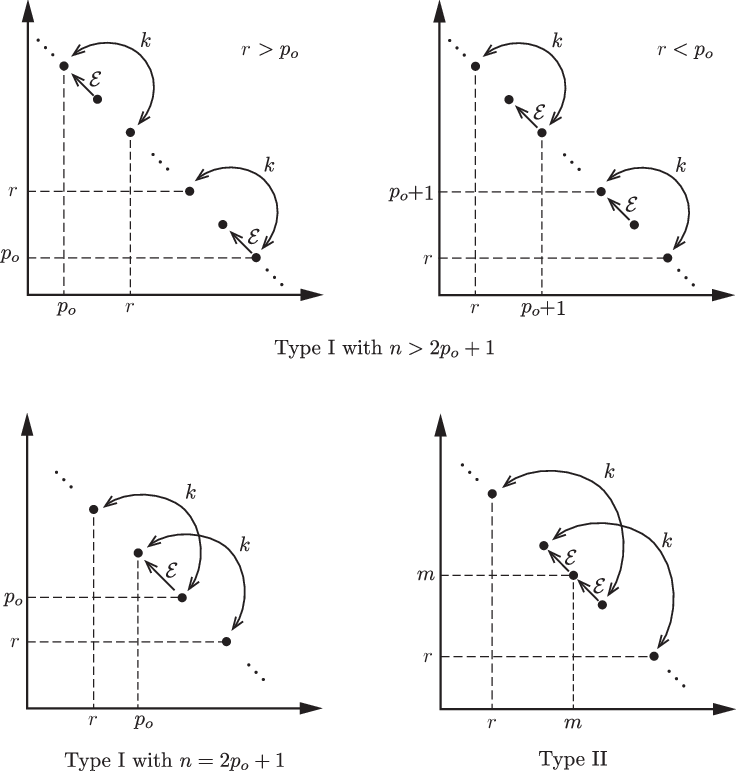}
\caption{Hodge decompositions with respect to $\varphi$}
\label{fig3}
\end{center}
\end{figure}
\\
By using  (\ref{decomp}) and (\ref{decomp2}) we have
\begin{align*}
&\Ad{(k)}(\calE)\bfd_N (v)=
	\begin{cases}
	\ii\Ad{(k)}(\calE)\bar{w}=\ii k\calE \bar{u}=0&\text{if }n=2p_o+1,\\
	kw=u&\text{otherwise};
	\end{cases}\\
&\Ad{(k)}(\calE)\bar{u}=k\calE \bar{w}=
	\begin{cases}
	-\ii kw=-\ii u&\text{if }n=2p_o+1,\\
	\calE \bar{w}=\overline{\bfd_N(v)} &\text{otherwise};
	\end{cases}\\
&\Ad{(k)}(\calE)v'=0\quad\text{if }v'\perp u\text{ and }\overline{\bfd_N(v)}.	
\end{align*}
Since $\bfd_N(v)\in V^{p_o+1,n-p_o-1}$ by (\ref{Del-Hdg}), we conclude that
\begin{align}\label{cal3}
\Ad{(k)}\calE\in 
\begin{cases}
\frakg^{4\ell -1} \subset \frakp&\text{if }n=2p_o+1,\\
\frakg^{2\ell -1}\subset\frakp&\text{otherwise}.
\end{cases}
\end{align}
For the case where $r=p_o-2\ell+1$ with $\ell\geq 1$, we put $w=\bfd_N(v)$ instead and define $k\in K$ as (\ref{cpt}).
Then we obtain (\ref{cal3}).

Next, we consider the case for type II.
Let $v\in I^{m+1,m+1}$ such that $\|\bfd_N(v)\|=1$ and put $w=\bfd_N(v)$.
Then
\begin{align*}
&\overline{\calE w}=-\calE w,\quad -2\bar{w}=\calE^2 w.
\end{align*}
Let $r=m+2\ell+1$ with $i^{r,n-r}\neq 0$.
Since $n-r=m+2(-\ell-1)+1$ and  $i^{n-r,r}\neq 0$, we may assume $\ell<0$.
We choose $u\in V^{r,n-r}$ so that $\| u\|=1$.
We define $k\in K$ as (\ref{cpt}), and then 
\begin{align*}
&\Ad{(k)}(\calE) u=k\calE w=\calE w,\quad \Ad{(k)}(\calE) \calE w=-2\bar{u},\\
&\Ad{(k)}(\calE)v'=0\quad\text{if }v'\perp \bar{u}\text{ and }\overline{\calE w}.	
\end{align*}
Since $\calE w\in V^{m,m}$, we have $\Ad{(k)}\calE\in \frakg^{-2\ell-1}\subset \frakp$.
\end{proof}

%
%

\end{document}